\def\makeautorefname#1#2{\expandafter\def\csname#1autorefname\endcsname{#2}}
\def\equationautorefname~#1\null{(#1)\null}
\newtheorem{thm}{Theorem}[section]
\newtheorem{prop}{Proposition}[section]
\newtheorem{lem}{Lemma}[section]
\theoremstyle{definition}
\newtheorem{defn}{Definition}[section]
\let\c@obs=\c@thm
\let\c@cor=\c@thm
\let\c@prop=\c@thm
\let\c@lem=\c@thm
\let\c@prob=\c@thm
\let\c@con=\c@thm
\let\c@conj=\c@thm
\let\c@defn=\c@thm
\let\c@notn=\c@thm
\let\c@notns=\c@thm
\let\c@exmp=\c@thm
\let\c@ax=\c@thm
\let\c@pro=\c@thm
\let\c@ass=\c@thm
\let\c@warn=\c@thm
\let\c@rem=\c@thm
\let\c@sch=\c@thm
\let\c@equation=\c@thm
\numberwithin{equation}{section}
\DeclareMathOperator{\ex}{ex}
\DeclareMathOperator{\FR}{FR}
\begin{document}

\begin{frontmatter}

\title{Non-jumping densities of 3-uniform hypergraphs}

\author[uchicago]{Vaughn Komorech}
\ead{vaughnkomorech@gmail.com}

\address[uchicago]{Department of Mathematics,
University of Chicago,
Chicago, IL 60637, USA}

\begin{abstract}
    A density $\alpha\in [0, 1)$ is a jump for $r$ if there is some $c >0$ such that there does not exist a family of $r$-uniform hypergraphs with Tur\'an density in $(\alpha, \alpha + c)$. Erd\"os conjectured that all $\alpha\in [0, 1)$ are jumps for any $r$. This was disproven by Frankl and R\"odl when they provided examples of non-jumps. In this paper, we provide a method for finding non-jumps for $r = 3$ using patterns. As a direct consequence, we find a few more examples of non-jumps for $r = 3$.
\end{abstract}

\begin{keyword}
hypergraphs \sep Tur\'an density \sep jumps \sep non-jumps
\end{keyword}

\end{frontmatter}

\section{Introduction}

An $r$-uniform hypergraph is a pair $(V, E)$ where $V$ is a set of vertices and $E$ is a set of $r$-element subsets of $V$ called edges. For convenience, we use $r$-graph in place of $r$-uniform hypergraph here. For an $r$-graph $G$, let $V(G)$ and $E(G)$ denote the set of vertices and edges of $G$, respectively, and let $v(G) = |V(G)|$ and $e(G) = |E(G)|$. For a positive integer $n$, we write $[n]=\{1,\dots,n\}$, and denote by $\binom{[n]}{k}$ the collection of all $k$-element subsets of $[n]$. A basic parameter of an $r$-graph is its density.

\begin{defn}
    The density of an $r$-graph $G$ is
    \[d(G) = \frac{e(G)}{\binom{v(G)}{r}}.\]
\end{defn}

For an $r$-graph $G$ and a positive integer $n$, the extremal number $\ex(n, G)$ denotes the maximum number of edges in an $r$-graph on $n$ vertices that does not contain $G$ as a subgraph. The extremal number for a family of $r$-graphs is defined analogously, except we forbid subgraphs isomorphic to any member of the family. The Tur\'an density of a family describes the limiting behavior of the extremal number for a family of $r$-graphs.

\begin{defn}
    The Tur\'an density of a family of $r$-graphs $\mathcal{F}$ is defined as 
    \[\pi(\mathcal{F}) = \lim_{n\to \infty} \frac{\ex(n, \mathcal{F})}{\binom{n}{r}}.\]
\end{defn}

Such a limit exists by an averaging argument. Jumps describe gaps in the set of Turán densities of all families of $r$-graphs.

\begin{defn}  
    We say that $\alpha\in [0, 1)$ is a jump for $r$ if there exists some $c > 0$, depending on $\alpha$ and $r$, such that there does not exist any family of $r$-graphs $\mathcal{F}$ with $\pi(\mathcal{F}) \in (\alpha, \alpha + c)$. 
\end{defn}

Oftentimes, a different equivalent definition of hypergraph jumps is used in place of this one. The proof of equivalence follows from the argument in \cite{frankrodl1984}.

\begin{prop}
    The following are equivalent: 
    \begin{enumerate}
        \item $\alpha$ is a jump for $r$.
        \item there exists some $c > 0$, depending on $\alpha$ and $r$, such that given any $\epsilon > 0$ and any integer $m\geq r$ there is some integer $N > 0$, depending on $\alpha$, $r$, $m$, and $\epsilon$, such that any $r$-graph on $n\geq N$ vertices with at least $(\alpha + \epsilon)\binom{n}{r}$ edges contains some subgraph on $m$ vertices with at least $(\alpha + c)\binom{m}{r}$ edges.
    \end{enumerate}
\end{prop}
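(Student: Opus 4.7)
The plan is to prove both directions by contradiction, using the standard averaging inequality $\pi(\mathcal{F}) \leq \ex(m,\mathcal{F})/\binom{m}{r}$ valid for every $m \geq r$. This inequality comes from the fact that a uniformly random $m$-vertex subset of an $\mathcal{F}$-free graph is itself $\mathcal{F}$-free and, in expectation, inherits the density of the whole graph. Given this tool, the substance of the proposition is just choosing the right forbidden family in each direction.

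For the implication $(1) \Rightarrow (2)$, I would take a jump constant $c$ and set $d := c/2$. Assume toward contradiction that (2) fails with this $d$: some $\epsilon > 0$ and $m \geq r$ admit, for every $N$, an $r$-graph on $n \geq N$ vertices with density at least $\alpha + \epsilon$ yet no $m$-vertex subgraph of density $\alpha + d$ or more. Let $\mathcal{F}$ be the family of all $r$-graphs on exactly $m$ vertices with at least $(\alpha + d)\binom{m}{r}$ edges. The obstructing graphs are then $\mathcal{F}$-free, which gives $\pi(\mathcal{F}) \geq \alpha + \epsilon > \alpha$; on the other hand, any member of $\mathcal{F}$ trivially contains itself, so every $\mathcal{F}$-free graph on $m$ vertices has strictly fewer than $(\alpha + d)\binom{m}{r}$ edges, and the averaging inequality yields $\pi(\mathcal{F}) < \alpha + d < \alpha + c$. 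Thus $\pi(\mathcal{F}) \in (\alpha, \alpha + c)$, contradicting the jump hypothesis.

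For the implication $(2) \Rightarrow (1)$, I would claim that the same $d$ witnesses a jump. If not, some family $\mathcal{F}$ has $\pi(\mathcal{F}) = \alpha + \epsilon'$ with $0 < \epsilon' < d$. Since $\ex(m,\mathcal{F})/\binom{m}{r}$ decreases to $\pi(\mathcal{F})$ as $m \to \infty$, I would fix $m$ so large that $\ex(m,\mathcal{F}) < (\alpha + d)\binom{m}{r}$ and then apply hypothesis (2) with this $m$ and $\epsilon := \epsilon'/2$ to obtain a threshold $N$. For all sufficiently large $n \geq N$, the definition of Tur\'an density furnishes an $\mathcal{F}$-free $r$-graph on $n$ vertices with at least $(\alpha + \epsilon'/2)\binom{n}{r}$ edges; (2) then produces an $m$-vertex subgraph of it with at least $(\alpha + d)\binom{m}{r}$ edges, which exceeds $\ex(m,\mathcal{F})$ and therefore contains a member of $\mathcal{F}$, contradicting $\mathcal{F}$-freeness.

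There is no deep obstacle; the main delicate point is bookkeeping the strict and weak inequalities so that, in the first direction, the constructed family $\mathcal{F}$ is simultaneously large enough to capture every dense $m$-vertex configuration yet self-contained in the sense needed for $\pi(\mathcal{F}) < \alpha + d$, and, in the second direction, the chosen $m$ and $\epsilon$ fit the bounds coming from (2). The averaging step is the only substantive tool; everything else is parameter chasing.
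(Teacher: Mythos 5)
Your proof is correct and takes exactly the standard route that the paper itself omits (it only cites \cite{mubayi2007} for this equivalence): for $(1)\Rightarrow(2)$ you forbid the family of all $m$-vertex $r$-graphs with at least $(\alpha+d)\binom{m}{r}$ edges, and for $(2)\Rightarrow(1)$ you use the averaging monotonicity of $\ex(n,\mathcal{F})/\binom{n}{r}$, which is precisely the argument in the cited source. The only cosmetic point is to shrink the jump constant so that $\alpha+c\le 1$ (always possible), which guarantees the forbidden family in the first direction is nonempty.
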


By the Erd\"os-Stone Theorem and another result from Erd\"os \cite{erdos1964},  all $\alpha \in [0, 1)$ are jumps for $r = 2$. Furthermore, Erd\"os' work in \cite{erdos1964} shows that all $\alpha \in \left.\left[0, \frac{r!}{r^r}\right.\right)$ are jumps for $r$. In general, little is known about behavior in the interval $\left.\left[\frac{r!}{r^r}, 1\right.\right)$. A conjecture of Erd\"os \cite{franklpeng2007} says that $\frac{r!}{r^r}$ is a jump for $r$. This problem remains open, but progress has been made in its direction. Baber and Talbot \cite{babertalbot2011} showed that $\alpha\in [0.2299, 0.2316)\cup [0.2871, 8/27)$ are jumps for $r = 3$. Aside from the interval $\left.\left[0, \frac{r!}{r^r}\right.\right)$, these are the only known jumps for $r\geq 3$. However, slightly more can be said about non-jumps.

Frankl and R\"odl \cite{frankrodl1984} were the first to find examples of non-jumps. They showed that for $r\geq 3$ and $\ell > 2r$ the density $1 - \frac{1}{\ell^{r - 1}}$ is not a jump for $r$. Since then, a number of non-jumps and sequences of non-jumps have been found \cite{peng2008, peng2009, shaw2025, yanpeng2023}. Peng \cite{peng2009} showed that if $\alpha\cdot \frac{r!}{r^r}$ is a non-jump for $r \geq 3$, then $\alpha\cdot \frac{p!}{p^p}$ is a non-jump for any $p \geq r$. Recently, Shaw \cite{shaw2025} showed that the smallest non-jumps that can be found using the Frankl-R\"odl method are $\frac{6}{121}\left(5\sqrt{5} - 2\right)$ for $r = 3$ and $2\cdot\frac{r!}{r^r}$ for $r\geq 4$.

In this paper, we provide a way to find non-jumps using patterns. This follows from Shaw's work \cite{shaw2025} on Frankl and R\"odl's method of finding non-jumps. As a result, we find a few examples of non-jumps. Specifically, we show the following.

\begin{thm} \label{thm:nonjump1}
    The density $64/81$ is not a jump for $r = 3$.
\end{thm}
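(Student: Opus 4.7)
The plan is to apply the pattern-based sufficient condition for non-jumps developed in the body of this paper, with the relevant pattern suggested by the identity $1 - 64/81 = 17/81$ interpreted via a $[3]\times[3]$ product coloring. First, I would exhibit the candidate construction. Partition $[n]$ into nine classes $A_{ij}$, $(i,j)\in [3]\times [3]$, of equal size $m = n/9$, and let $G_n^*$ be the $3$-graph on $[n]$ whose edges are the triples whose first coordinates are not all equal and whose second coordinates are not all equal. A standard inclusion--exclusion gives
\[
d(G_n^*) \;=\; 1 - \frac{3\binom{3m}{3}+3\binom{3m}{3}-9\binom{m}{3}}{\binom{9m}{3}},
\]
which tends to $1 - 17/81 = 64/81$ as $n\to\infty$. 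The candidate pattern $P$ encodes the $9$-class product coloring together with the ``not super-monochromatic and not sub-monochromatic'' edge rule.

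Second, following the Frankl--R\"odl strategy in the form encoded by this paper's pattern method, I would perturb $G_n^*$ by adding a small, controlled number of edges of the forbidden monochromatic types. This should produce, for each $c > 0$, a family $\mathcal{F}_c$ of $3$-graphs whose Tur\'an density lies in $(64/81,\,64/81+c)$, contradicting jump-hood. For this argument to go through, the pattern $P$ must satisfy the admissibility hypothesis of the pattern theorem; verification should reduce to a finite check across the nine triple-types, simplified by the $S_3\times S_3$ symmetry of $P$ acting on $[3]\times[3]$.

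I expect the main obstacle to be the stability component of admissibility: showing that every near-extremal $3$-graph with density close to $64/81$ is, up to a small perturbation, captured by the $[3]\times[3]$ product coloring. A priori, other $9$-class partitions with different edge rules could reach densities near $64/81$, and excluding them requires invoking the product structure in an essential way. If the paper's pattern theorem already encapsulates this stability conclusion, the proof of Theorem~\ref{thm:nonjump1} reduces to the finite admissibility check for the pattern above; otherwise, a bespoke supersaturation argument tailored to the $9$-part structure will be needed.
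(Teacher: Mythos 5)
Your approach has a fatal flaw at the very first step, and it also misidentifies what needs to be verified. The pattern machinery in this paper works with the Lagrangian, i.e.\ with the \emph{optimal} weighting of the pattern, not with the balanced one. Your $[3]\times[3]$ product pattern (triples of cells, with multiplicity, whose first coordinates are not all equal and whose second coordinates are not all equal) does have balanced blowups of density tending to $1-17/81 = 64/81$, but its blowup density $b(P)=6\lambda(P)$ is strictly larger: put all the weight on the three diagonal cells $(1,1),(2,2),(3,3)$, each with weight $1/3$. Then the forbidden triples are exactly those inside a single cell, so the blowup density is $1-3\cdot(1/3)^3 = 8/9 > 64/81$. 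Hence the uniform weighting is not a maximizer, and Theorem~\ref{thm:frmethod} applied to this pattern could at best address the density $6\lambda(P)\ge 8/9$, never $64/81$. The paper instead uses the $3$-vertex pattern $P=\{123,122,112,113,223\}$, for which one computes $\lambda(P)=32/243$ (attained at $w_1=w_2=4/9$, $w_3=1/9$), so that $b(P)=64/81$ exactly; the non-jump then follows from Theorem~\ref{thm:main} and Theorem~\ref{thm:frmethod}.

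Separately, the hypothesis you would need to check is not a stability statement. Theorem~\ref{thm:frmethod} requires only that some vertex $v$ receive positive weight in a maximal weighting and that $\lambda(\FR_v(P))=\lambda(P)<1$; Theorem~\ref{thm:main} reduces this to the purely combinatorial condition that $\{122\}\cup\{11i\mid i\in[n]\setminus\{1\}\}\subseteq\mathcal{E}$ together with a Lagrangian evaluation. No characterization of near-extremal $3$-graphs with density close to $64/81$ is needed anywhere (and your product pattern would in any case fail the edge condition of Theorem~\ref{thm:main}: for $v=(1,1)$ the multiset $\{v,v,u\}$ is an edge only when $u$ differs from $v$ in both coordinates). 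Your sketch of ``perturbing $G_n^*$ by adding a few monochromatic edges'' is the intuition behind the Frankl--R\"odl construction, but without the identity $\lambda(\FR_v(P))=\lambda(P)$ — which is the entire quantitative content of the proof and is absent from your proposal — it does not yield families with Tur\'an density in $(64/81,\,64/81+c)$.
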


\begin{thm} \label{thm:nonjump2}
    Let $n\in \mathbb{N}$ and $k = \sqrt{3n - 2}$ (not necessarily integral). Then, $1 - \left(\frac{k}{n + k}\right)^2$ is not a jump for $r = 3$.
\end{thm}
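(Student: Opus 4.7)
The plan is to apply the pattern-based non-jump criterion developed in the body of the paper to an explicit family of 3-uniform patterns indexed by $n$. The first step is an algebraic simplification of the target density: since $k^2 = 3n-2$, we have
\[
\frac{3n^2-2n+k^3}{(n+k)^3} \;=\; \frac{nk^2+k^3}{(n+k)^3} \;=\; \frac{k^2(n+k)}{(n+k)^3} \;=\; \frac{k^2}{(n+k)^2},
\]
so the target density is $1 - \bigl(\tfrac{k}{n+k}\bigr)^2$. This suggests a construction on $n+k$ blocks of two types---$n$ blocks of ``type $A$'' and $k$ of ``type $B$''---designed so that the forbidden triples of blocks in the pattern are exactly those containing two prescribed $B$-coordinates, contributing $k^2$ forbidden blocks out of $(n+k)^2$ possible continuations.

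Second, I would define the pattern $P_n$ explicitly on these $n+k$ blocks and take the iterated blow-up in the Frankl--R\"odl / Shaw style recast in the paper's pattern language, verifying that the limiting edge density converges to $1 - k^2/(n+k)^2$. This step is a fairly direct edge-count over ordered triples of blocks, with lower-order terms from within-block and two-block triples vanishing in the limit.

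Third, I would verify that $P_n$ meets the hypotheses of the main pattern-based non-jump theorem of the paper (the result alluded to just before Theorem~\ref{thm:nonjump1}). The key point is that the choice $k = \sqrt{3n-2}$ should arise as the value of a free integer/real parameter that optimizes the achievable density subject to the pattern's admissibility constraints; the identity $k^2 = 3n-2$ is precisely the first-order optimality condition for $1 - k^2/(n+k)^2$ under the constraint tying $k$ to the block counts. Applying the main theorem then yields that $1 - k^2/(n+k)^2$ is a non-jump for $r=3$, and unwinding the simplification above recovers the stated form.

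The main obstacle will be verifying the hypotheses of the general pattern theorem for $P_n$---in particular, ensuring that no small perturbation of the blown-up construction produces a strictly denser subgraph, which is what would otherwise allow $1 - k^2/(n+k)^2$ to be a jump. Equivalently, one must show that the extremal density is pinned down exactly by $k = \sqrt{3n-2}$ and that any attempt to push it higher requires a structural modification the pattern rules out. This should follow from the same density-amplification mechanism Shaw used to obtain $\tfrac{6}{121}(5\sqrt{5}-2)$, where an irrational optimizing parameter arises for analogous extremal reasons; Theorem~\ref{thm:nonjump1} would be the analogous corollary for a different pattern in the same framework.
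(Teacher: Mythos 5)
Your opening simplification is correct and consistent with the paper: since $k^2=3n-2$ we have $3n^2-2n+k^3=k^2(n+k)$, so the target density is $1-\frac{k^2}{(n+k)^2}$, which is exactly $6\lambda(P)$ for the paper's pattern. But the central construction in your plan has a genuine gap: you propose a pattern on $n+k$ blocks with $k$ blocks of ``type $B$,'' and $k=\sqrt{3n-2}$ is irrational for general $n\in\mathbb{Z}^+$, so no such pattern exists; a pattern must have an integer vertex set, and your later hedge that $k$ is a ``free integer/real parameter'' to be optimized does not repair this, because the number of vertices is not the quantity being optimized. The paper's route is different in precisely this respect: it takes a pattern on $n+1$ vertices (all simple triples on $[n+1]$ together with the multiset edges at vertex $1$ needed for Theorem \ref{thm:main}), and $k$ never enters the construction at all --- it emerges from the Lagrangian optimization, where Lemma \ref{lem:equiv} forces vertices $2,\dots,n+1$ to share their weight, and the Lagrange-multiplier condition yields $w_1=\frac{k}{n}w_2$, hence $w_1=\frac{k}{n+k}$, $w_2=\frac{n}{n+k}$ and $\lambda(P)=\frac{1}{6}-\frac{3n^2-2n+k^3}{6(n+k)^3}$. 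Your intuition that vertex-type proportions $n:k$ are responsible for the density is morally right, but it has to be realized as an optimal \emph{weighting} of a fixed finite pattern, not as a block count.

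The second gap is that you never engage with the actual hypotheses of the non-jump criterion. Theorem \ref{thm:frmethod} requires a distinguished vertex $v$ of positive weight with $\lambda(\FR_v(P))=\lambda(P)<1$; in the paper this equality is not checked by hand for each $n$ but is guaranteed by Theorem \ref{thm:main}, whose hypothesis is the presence of the specific multiset edges $\{1,2,2\}$ and $\{1,1,i\}$ in the pattern, together with an explicit verification that vertex $1$ gets positive weight and an exact computation of $\lambda(P)$ (including ruling out the degenerate weightings $w_1=0$ and $w_2=0$). Your ``main obstacle'' paragraph instead speaks of showing that no perturbation of the blown-up construction is denser, which is not the condition one must verify in this framework and for which you offer no mechanism. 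Without a well-defined pattern, an exact Lagrangian evaluation matching the stated value, and verification of the edge conditions of Theorem \ref{thm:main}, the proposal does not amount to a proof.
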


\section{Blow-ups and Lagrangians}

Our proofs rely heavily on the ideas of hypergraph blow-ups and Lagrangians. We refer to the notation used in Keevash's survey on the topic \cite{keevash2011}. 

\begin{defn}
Let $G$ be an $r$-graph on the vertices $v_1, \dots, v_n$, and let $t = (t_1, \dots, t_n)\in \mathbb{N}^n$. A \emph{$t$-blow-up} of $G$ is the $r$-graph $G(t)$ obtained as follows: replace each $v_i$ by a set $V_i$ of $t_i$ copies. For each edge $\{v_{i_1},\dots,v_{i_r}\}\in E(G)$, include in $G(t)$ all $r$-sets $\{x_1,\dots,x_r\}$ with $x_j\in V_{i_j}$ for every $j\in[r]$.
\end{defn}

We use $(v, k)$ to refer to the $k$th copy of vertex $v$ in $G(t)$. So each vertex $v_i$ of $G$ is replaced by $V_i = \{(v_i, 1), \dots, (v_i, t_i)\}$. For example, a $(2, 2)$-blow-up of a single edge on two vertices $v$ and $w$ is a square with vertices $(v, 1), (w, 1), (v, 2), (w, 2)$ in cyclic order.

Let $p_G(t)$ denote the number of edges in $G(t)$. Then, $p_G(t) = \sum_{e\in E(G)} \prod_{i\in e} t_i$. Suppose we want to find the largest possible density for $G(t)$ as $|t|\to \infty$, where $|t| = \sum_{i\in [n]} t_i$. For any $t$, 
\[\lim_{m\to \infty} d(G(mt)) = \lim_{m\to \infty} \frac{1}{\binom{m|t|}{r}}p_G(mt) = r!p_G(t_1/|t|, \dots, t_n/|t|).\]
So it suffices to maximize $r!p_G(x)$ over all $x$ in the standard simplex $S = \{x : x_1 + \dots + x_n = 1, x_i \geq 0\quad \forall i\in [n]\}$. Since this sort of optimization appears frequently, a name is given to the essential component to be maximized. 
\begin{defn}
    The Lagrangian of an $r$-graph $G$ is defined as 
    \[\lambda(G) = \max_{x\in S} p_G(x).\]
\end{defn}
There is also a name for the maximum density. 
\begin{defn}
    The blow-up density of an $r$-graph $G$ is defined as
    \[b(G) = r!\lambda(G).\]
\end{defn}
We can think of the $x_i$s of $x$ as optimal weights in $[0, 1]$ that give the whole $r$-graph $G$ a weight $w(G) = \lambda(G)$. 

\section{Patterns}

To guide our search for non-jumps, we will need patterns as defined by Hou, Li, Yang, and Zhang in \cite{hou2024}. A pattern follows all of the same rules as a hypergraph except edges are multisets of vertices (i.e. repetitions allowed). 

\begin{defn}
    An $r$-pattern $P$ is a pair $(n, \mathcal{E})$, where $n\in \mathbb{N}$ is an integer representing the number of vertices of $P$ and $\mathcal{E}$ is a collection of $r$-multisets on $[n]$ representing the edges of $P$. The notation $\mathcal{E}(P)$ refers to the edge set of pattern $P$.
\end{defn}

For example, a $3$-pattern on $3$ vertices may have the edge set $\mathcal{E} = \{112, 123, 223\}$. A blow-up of a pattern is defined analogously to a blow-up of a hypergraph. Let $P$ be an $r$-pattern on vertices $v_1, \dots, v_n$, and let $t = (t_1, \dots, t_n)\in \mathbb{N}^n$. The $t$-blow-up of $P$ is the $r$-pattern $P(t)$ obtained by replacing each vertex $v_i$ with a set $V_i$ of $t_i$ copies. An $r$-multiset $e$ of vertices in $P(t)$ is an edge if the $r$-multiset obtained from $e$ by replacing each vertex of $V_i$ by $v_i$ is an edge of $P$. A simple blow-up of $P$, denoted $P[t]$, is the blow-up $P(t)$ with all edges containing repeated copies removed. For example, if $P = (3, \mathcal{E})$ and $t = (2, 2, 1)$ then $P[t]$ has edge set 
\[
\begin{aligned}
\mathcal{E}(P[t]) = \{&
(1,1)(1,2)(2,1),\ (1,1)(1,2)(2,2),\\
&(1,1)(2,1)3,\ (1,2)(2,1)3,\\
&(1,1)(2,2)3,\ (1,2)(2,2)3,\ (2,1)(2,2)3
\}.
\end{aligned}
\]

The Lagrangian for a pattern is also defined analogously to the Lagrangian for a hypergraph. Let $m_e(i)$ denote the multiplicity of vertex $i$ in edge $e$. Then for any pattern $P$ and $n$-tuple $t = (t_1, \dots, t_n)$,
\[
|\mathcal{E}(P[t])| = \sum_{e\in \mathcal{E}(P)} \prod_{i=1}^n \binom{t_i}{m_e(i)}.
\]
As $m\to\infty$,
\[
|\mathcal{E}(P[mt])| \sim \sum_{e\in \mathcal{E}(P)} \prod_{i=1}^n \frac{(mt_i)^{m_e(i)}}{m_e(i)!} = m^r \sum_{e\in \mathcal{E}(P)} \prod_{i=1}^n \frac{t_i^{m_e(i)}}{m_e(i)!}.
\]
Let
\[p_P(t) = \sum_{e\in \mathcal{E}(P)} \prod_{i=1}^n \frac{t_i^{m_e(i)}}{m_e(i)!}.
\]
Then
\[
\lim_{m\to\infty} d(P[mt]) = \lim_{m\to\infty} \frac{|\mathcal{E}(P[mt])|}{\binom{m|t|}{r}} = \frac{r!p_P(t)}{|t|^r} = r!p_P\left(\frac{t_1}{|t|},\dots,\frac{t_n}{|t|}\right).
\]
As before, if we wish to maximize the asymptotic density of $P[t]$, then it suffices to maximize $r!p_P(x)$ over the standard simplex $S$. Furthermore, the Lagrangian (or weight) of $P$ is defined as $\lambda(P) = \max_{x\in S} p_P(x)$, and its blow-up density is defined as $b(P) = r!\lambda(P)$. 

\section{Finding Non-jumps}
 
Now, we outline the strategy for finding non-jumps, building on the work of Shaw \cite{shaw2025}. First, we introduce the Frankl-R\"odl construction $\FR_v(P)$ from a pattern $P$.

\begin{defn}
    Suppose we are given an $r$-pattern $P = (n, \mathcal{E})$ and $v\in [n]$. Let $t$ be defined such that $t_v = r$ and $t_i = 1$ for all other vertices $i\in [n]$. Let $P'$ be an $r$-pattern on the vertices of $P(t)$ with 
    \[\mathcal{E}(P') = \{e\in \mathcal{E}(P(t)) :  m_e((v, i)) \leq 1 \quad \forall i\in \{2, \dots, r\}\},\] 
    Then, $\FR_v(P)$ is defined as
    \[\FR_v(P) = P'\cup \{(v, 1)\cdots(v, r)\}.\]
\end{defn}

This construction is useful for providing a sufficient condition for $\alpha$ to be a non-jump for $r$. The proof of this can be found in \cite{shaw2025}.

\begin{thm} \label{thm:frmethod}
    Let $P$ be an $r$-pattern, and $v$ be a vertex of $P$ such that an optimal weighting of $P$ assigns $v$ positive weight. Suppose
    \[\lambda(\FR_v(P)) = \lambda(P) < 1.\] 
    Then, $r!\lambda(P)$ is not a jump for $r$-graphs.
\end{thm}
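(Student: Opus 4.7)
The plan is to argue by contradiction via the supersaturation formulation in the Proposition above. Suppose $\alpha := r!\lambda(P)$ were a jump. Then there would exist $d > 0$ and an integer $m \geq r$ such that every sufficiently large $r$-graph of density exceeding $\alpha$ contains an induced $m$-vertex subgraph of density at least $\alpha + d$. The goal is to construct a sequence of $r$-graphs $G_N$ on at least $N$ vertices with $d(G_N) \geq \alpha + \epsilon_0$ for a fixed $\epsilon_0 > 0$, but whose $m$-vertex induced subgraphs all have density at most $\alpha + d/2$, producing the desired contradiction.

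I would build the $G_N$ by iterating the Frankl--R\"odl operation. Setting $P_0 := P$ and $v_0 := v$, define inductively $P_{k+1} := \FR_{v_k}(P_k)$ with the new distinguished vertex $v_{k+1} := (v_k, 1)$. Using the hypothesis $\lambda(\FR_v(P)) = \lambda(P)$ and the positivity of $v$'s weight in the maximal weighting of $P$, one shows inductively that $\lambda(P_k) = \lambda(P)$ and that $v_k$ continues to receive positive weight. The graph $G_N$ is then a suitable weighted blowup of $P_K$ for some $K = K(N) \to \infty$, using the optimal weights on the ``main'' vertices and a small positive proportion $\delta$ on each auxiliary copy $(v_{k-1}, i)$ for $i \geq 2$. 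At each of the $K$ levels the added edge $(v_{k-1},1)\cdots(v_{k-1},r)$ then contributes density of order $\delta^{r-1}$, while the bulk density loss from perturbing away from the optimum is only second-order in $\delta$; summing over all $K$ levels and tuning $\delta$ and $K$ gives a uniform density excess $\epsilon_0 > 0$.

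The upper bound on $m$-vertex induced subgraph densities rests on the observation that any $m$-vertex induced subgraph of $G_N$ is, modulo the presence of at most $O_m(1)$ added FR-edges that happen to lie within the chosen $m$-set, a simple blowup of a sub-pattern of $P_K$ and hence has bulk density bounded above by $r!\lambda(P_K) = \alpha$. The $O_m(1)$ added edges contribute at most a constant multiple of $1/\binom{m}{r}$ to the density, which is less than $d/2$ as soon as $m$ is chosen large enough at the outset. This contradicts the jump hypothesis.

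The principal obstacle is the inductive Lagrangian analysis in the construction step: the hypothesis $\lambda(\FR_v(P)) = \lambda(P)$ is only assumed for $k = 0$, and propagating it forward requires verifying that a maximal weighting of $P_k$ extends to one of $P_{k+1}$ that concentrates all weight on $(v_k, 1)$ (and the original non-$v_k$ vertices) with zero weight on $(v_k, 2), \ldots, (v_k, r)$, so that the added edge contributes nothing at the optimum. A secondary delicate point is balancing $\delta$ against $K$ in the density lower bound so that the cumulative FR-edge gain strictly exceeds the perturbation loss; the hypothesis $\lambda(P) < 1$ plays the auxiliary role of ensuring $\alpha$ lies in the valid range $[0,1)$ so that the contradiction is actually meaningful.
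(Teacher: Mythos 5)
There is a genuine gap, and it sits at the heart of your construction rather than in the points you flag as delicate. Your lower bound requires graphs $G_N$ of density at least $\alpha+\epsilon_0$ for a fixed $\epsilon_0>0$, but your $G_N$ are (weighted simple) blowups of the iterated patterns $P_K$, and every blowup of $P_K$ has density at most $r!\,\lambda(P_K)\bigl(1+o(1)\bigr)$. Since your own upper-bound step relies on the inductive claim $\lambda(P_K)=\lambda(P)$, this caps $d(G_N)$ at $\alpha+o(1)$: the two halves of your plan are mutually inconsistent, and no choice of $\delta$ and $K$ can rescue it. The hypothesis $\lambda(\FR_v(P))=\lambda(P)$ is precisely the statement that the added FR-edge yields no Lagrangian (hence no asymptotic density) gain, so baking the added edges into the pattern and blowing up can never produce an excess. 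Your gain/loss accounting also fails on its own terms: for $r=3$ the gain from a level-$k$ added edge is of order $\delta^{2}$, the same order as the loss incurred because the auxiliary copies cannot repeat inside multiplicity-$\ge 2$ pattern edges (and for $r\ge 4$ the gain $\delta^{r-1}$ is of strictly smaller order than the $\delta^{2}$ loss), so the claimed first-order-versus-second-order comparison is wrong for every $r\ge 3$.

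The idea you are missing, which is how the actual argument (Frankl--R\"odl, as adapted by Shaw) works, is that the extra edges are not pattern-level objects at all. One first blows up $P$ so that the class of $v$ has $t$ vertices, and then inserts inside that class an auxiliary $r$-graph that is locally sparse (e.g.\ constructed probabilistically or greedily so that every bounded set of vertices spans only a trivial configuration of added edges). This added graph raises the Lagrangian of the construction strictly above $\lambda(P)$ --- by an amount that may tend to $0$ as $t\to\infty$, which is harmless because in the negation of the jump statement (or in the Frankl--R\"odl criterion via finite families of graphs of blowup density exceeding $\alpha$) the density surplus may depend on the construction --- while the local sparsity guarantees that any subgraph on boundedly many vertices embeds into a blowup of $\FR_v(P)$ and hence has Lagrangian at most $\lambda(\FR_v(P))=\lambda(P)$. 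The contradiction is then with the uniform gap $d$: a jump would force bounded-size subgraphs of Lagrangian at least roughly $(\alpha+d)/r!$, whereas the construction admits none above $\alpha/r!$. The positivity of $v$'s weight is what makes the inserted sparse graph actually contribute, and $\lambda(P)<1$ is needed so that there is room to insert it; but without replacing your iterated-pattern blowup by this ``pattern blowup plus locally sparse insertion'' device, the density lower bound cannot be achieved and the proof does not go through.
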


As the name suggests, this method was first used by Frankl and R\"odl \cite{frankrodl1984} to show the existence of non-jumps. Using this technique, we prove the densities given in Theorem \ref{thm:nonjump1} and Theorem \ref{thm:nonjump2} are non-jumps. First, we need a lemma for patterns with equivalent vertices. Two vertices in a pattern are deemed equivalent if their labelings can be swapped to produce a pattern isomorphic to the original. We use the notation $w_v$ for the weight of vertex $v$.

\begin{lem} \label{lem:equiv}
    Let $P = (n, \mathcal{E})$ be a $3$-pattern, and let $i$ and $j$ be vertices in $P$. If $i$ and $j$ are equivalent, then there exists an optimal weighting such that either $w_i = w_j$ or one of $w_i$ and $w_j$ is $0$.
\end{lem}
\begin{proof}
    Suppose the weights of all vertices in $[n]\setminus\{i,j\}$ are fixed. Since the total weight is fixed, the sum $s = w_i + w_j$ is also fixed. Then the contribution of $i$ and $j$ to $w(P)$ can be written as
    \[
    w(P) = (w_i + w_j)C_1 + (w_i^2 + w_j^2)C_2 + w_iw_j\,C_3
    \]
    where $C_1,C_2,C_3$ depend only on the weights of the remaining vertices. Indeed, terms such as $w_i^3 + w_j^3$ and $w_i^2w_j + w_iw_j^2$ can be rewritten using 
    \begin{align*}
        w_i^3 + w_j^3 &= s(w_i^2 + w_j^2) - sw_iw_j \\
        w_i^2w_j + w_iw_j^2 &= sw_iw_j.
    \end{align*}
    If both $w_i$ and $w_j$ are positive for an optimal weighting, then by the
    Lagrange multiplier method,
    \[
    \frac{\partial}{\partial w_i} w(P) = \frac{\partial}{\partial w_j} w(P).
    \]
    This simplifies to
    \[
    (2C_2 - C_3)(w_i - w_j)=0.
    \]
    Therefore, either $w_i = w_j$ or $C_3 = 2C_2$. In the second case,
    \[
    w(P) = sC_1 + s^2C_2,
    \]
    so the value of $w(P)$ does not change with the relative weights of $w_i$ and $w_j$. Consequently, there exists an optimal weighting with $w_i = w_j$. Otherwise, an optimal weighting can be chosen on the boundary of the feasible region, so one of $w_i$ or $w_j$ is $0$.
\end{proof}

We also need the blow-up density of a specific small $3$-pattern.
\begin{lem} \label{lem:smallpattern}
    Let $P = \{112, 122\}$. Then, 
    \[\lambda(\FR_1(P)) = \lambda(P) = \frac{1}{8}.\]
\end{lem}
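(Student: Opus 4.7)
The plan is to compute both Lagrangians directly and verify equality. For $\lambda(P)$, the polynomial $p_P(x_1, x_2) = \frac{x_1^2 x_2 + x_1 x_2^2}{2} = \frac{x_1 x_2(x_1 + x_2)}{2}$ simplifies on the simplex $x_1 + x_2 = 1$ to $\frac{x_1 x_2}{2}$, which is maximized at $x_1 = x_2 = \frac{1}{2}$, giving $\lambda(P) = \frac{1}{8}$.

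For $\FR_1(P)$, I would first enumerate its edges explicitly. Blowing up vertex $1$ into three copies $(1,1), (1,2), (1,3)$ and keeping only those edges of $P(t)$ in which $(1,2)$ and $(1,3)$ each appear with multiplicity at most $1$, then adjoining the distinguished edge $(1,1)(1,2)(1,3)$, yields a pattern on four vertices with eight edges. Writing $a, b, c, d$ for the weights of $(1,1), (1,2), (1,3), 2$, the lower bound $\lambda(\FR_1(P)) \geq \frac{1}{8}$ follows at once from the weighting $a = d = \frac{1}{2},\ b = c = 0$, which effectively collapses the pattern back to $P$.

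For the upper bound I would introduce $f := b + c$, so $a + d = 1 - f$, and recast the full polynomial in the compact form
\[
p(a, b, c, d) \;=\; \tfrac{1}{2}\, d\bigl(a + f(1-f)\bigr) \;+\; (1-f)\,bc.
\]
Applying AM-GM to bound $bc \leq f^2/4$, and maximizing the first summand over the split of $a$ and $d$ (observing that $d(1 - f^2 - d)/2$ is concave in $d$ with maximum at $d = (1-f^2)/2$), reduces the problem to the single-variable bound
\[
p(a, b, c, d) \;\leq\; g(f) \;:=\; \frac{(1-f^2)^2}{8} + \frac{f^2(1-f)}{4}.
\]
A direct differentiation yields $g'(f) = f^2(2f - 3)/4$, which is nonpositive on $[0, 1]$; hence $g(f) \leq g(0) = \frac{1}{8}$, establishing $\lambda(\FR_1(P)) \leq \frac{1}{8}$.

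The principal obstacle is spotting the rearrangement that cleanly decouples the $bc$ contribution from the $ad$ dependence; once this identity is checked by direct expansion, both the AM-GM step and the subsequent one-variable optimization are essentially mechanical. Alternatively one could invoke Lemma \ref{lem:equiv} to assume $b = c$ from the start and finish via Lagrange multipliers on three variables, but the bookkeeping there is slightly messier.
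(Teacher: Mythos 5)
Your proposal is correct, and it takes a genuinely different route from the paper. You enumerate the eight edges of $\FR_1(P)$ correctly (including $(1,1)(1,1)2$, which is permitted since only the copies $(1,2),(1,3)$ are restricted to multiplicity at most one), and your identity
\[
p \;=\; \tfrac{1}{2}\,d\bigl(a+f(1-f)\bigr)+(1-f)\,bc,\qquad f=b+c,\ a+b+c+d=1,
\]
checks out by direct expansion: the cross terms regroup as $abd+acd=adf$, $\tfrac{(a+f)d^2}{2}$, and $bc(a+d)=(1-f)bc$, and $af+fd=f(1-f)$. The AM--GM step, the concavity of $d(1-f^2-d)/2$ in $d$ (whose unconstrained maximizer $d=(1-f^2)/2$ indeed satisfies $d\le 1-f$, so the bound $(1-f^2)^2/8$ is legitimate), and the computation $g'(f)=f^2(2f-3)/4\le 0$ on $[0,1]$ are all correct, giving $\lambda(\FR_1(P))\le g(0)=\tfrac18$, with the matching lower bound from $a=d=\tfrac12$, $b=c=0$. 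The paper instead first symmetrizes via Lemma \ref{lem:equiv} to set $w_{(1,2)}=w_{(1,3)}$, then runs a Lagrange-multiplier critical-point analysis in the three variables $a,b,c$, showing interior critical points are impossible and finishing with a case analysis on the boundary ($a=0$, $b=0$, or $c=0$). Your argument dispenses with both Lemma \ref{lem:equiv} and the boundary case analysis, since it is a single global upper bound valid on the whole simplex, which makes it cleaner and more self-contained; the paper's Lagrange-multiplier style has the advantage of being the same machinery reused (and extended) in the proof of Theorem \ref{thm:main}, where this lemma is later cited.
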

\begin{proof}
    Let $w$ be an optimal weighting of $P$ and $w_1$ and $w_2$ be the weights of the vertices. Since both $1$ and $2$ are equivalent, by Lemma \ref{lem:equiv} either $w_1 = w_2$ or one of $w_1$ or $w_2$ is $0$. Since we cannot have the second case, $w_1 = w_2 = \frac{1}{2}$. So 
    \[\lambda(P) = \frac{w_1^2w_2}{2} + \frac{w_1w_2^2}{2} = \frac{1}{8}.\]
    
    Now, let $w'$ be an optimal weighting of $\FR_1(P)$ and $w_{(1, 1)}'$, $w_{(1, 2)}'$, $w_{(1, 3)}'$, and $w_2'$ be the weights of the vertices. By Lemma \ref{lem:equiv}, either $w_{(1, 2)}' = w_{(1, 3)}'$ or exactly one of $w_{(1, 2)}'$ or $w_{(1, 3)}'$ is $0$. In the second case, changing both weights to equal half of the nonzero one does not decrease $w'(\FR_1(P))$, so we assume $w_{(1, 2)}' = w_{(1, 3)}'$. Let $a = w_{(1, 1)}'$, $b = 2w_{(1, 2)}' = w_{(1, 2)}' + w_{(1, 3)}'$, and $c = w_2'$. Then, 
    \[w'(\FR_1(P)) = \frac{ab^2}{4} + \frac{a^2c}{2} + \frac{b^2c}{4} + abc + \frac{(a + b)c^2}{2}.\]
    We want to maximize $w'(\FR_1(P))$ subject to the constraint $a + b + c = 1$. Assume $a$, $b$, and $c$ are all positive. Then, using the Lagrange multiplier method,
    \[\frac{\partial}{\partial a} w'(\FR_1(P)) = \frac{\partial}{\partial b} w'(\FR_1(P)) = \frac{\partial}{\partial c} w'(\FR_1(P)).\]
    We have 
    \begin{align}
        \frac{\partial}{\partial a} w'(\FR_1(P)) &= \frac{b^2}{4} + ac + bc + \frac{c^2}{2} \label{eq:1}\\
        \frac{\partial}{\partial b} w'(\FR_1(P)) &= \frac{ab}{2} + \frac{bc}{2} + ac + \frac{c^2}{2} \label{eq:2}\\
        \frac{\partial}{\partial c} w'(\FR_1(P)) &= \frac{a^2}{2} + \frac{b^2}{4} + ab + (a + b)c \label{eq:3}.
    \end{align}
    Subtracting \eqref{eq:1} from \eqref{eq:3} gives us $b = \frac{c^2 - a^2}{2a}$, and subtracting \eqref{eq:1} from \eqref{eq:2} gives us $b = 2(a - c)$. If $c < a$, we get $b < 0$ from the first equality, and if $c > a$ we get $b < 0$ from the last equality. Finally, if $a = c$, then $b = 0$. This contradicts the assumption $b > 0$; therefore, no interior critical point exists, and the optimum occurs on the boundary. If $a = 0$, then the optimal weight is 
    \[\frac{b^2c}{4} + \frac{bc^2}{2}\leq \frac{b^2c}{2} + \frac{bc^2}{2}.\] 
    Note that the right expression is the weight of $P$ for $w_1 = b$ and $w_2 = c$. So the optimal weight is at most $\lambda(P) = 1/8$ in this case. Likewise, if $c = 0$, then the optimal weight is $\frac{ab^2}{4} \leq \frac{1}{27} < \frac{1}{8}$. If $b = 0$, then we are left with the same expression as that for $w(P)$. So $\lambda(\FR_1(P)) = \lambda(P) = \frac{1}{8}$. 
\end{proof} 

So by Theorem \ref{thm:frmethod}, $\frac{3}{4}$ is a non-jump for $r = 3$. Now, we state and prove our main result.

\begin{thm} \label{thm:main}
    Let $P = (n, \mathcal{E})$ be a $3$-pattern such that $111\notin \mathcal{E}$ and an optimal weighting of $P$ assigns vertex $1$ positive weight. Suppose 
    \[\{122\}\cup \{11i: i\in [n]\setminus \{1\}\}\subseteq \mathcal{E}.\] 
    Then, $\lambda(\FR_1(P)) = \lambda(P)$.
\end{thm}

\begin{proof}
    Let $w$ be an optimal weighting for $\FR_1(P)$. Without loss of generality, $w_{(1, 2)} = w_{(1, 3)}$ by the same argument from Lemma \ref{lem:smallpattern}. Let $a = w_{(1, 1)}$, $b = w_{(1, 2)} + w_{(1, 3)} = 2w_{(1, 2)}$, and $c = \sum_{i = 2}^n w_i$. Then,
    \begin{align*}
        w(\FR_1(P)) &= \frac{ab^2}{4} + \frac{a^2c}{2} + \frac{b^2c}{4} + abc + \frac{1}{2}(a + b)\left(\sum_{\substack{1ii\in \mathcal{E} \\ i\in [n]\setminus \{1\}}} w_i^2\right) + (a + b)\left(\sum_{\substack{1ij\in \mathcal{E} \\ 2\leq i < j\leq n}} w_iw_j\right) \\
        &+ \sum_{\substack{ijk\in \mathcal{E} \\ 2\leq i < j < k \leq n}} w_iw_jw_k + \sum_{\substack{iij\in\mathcal{E} \\ i,j\in [n]\setminus\{1\}\\ i\neq j}} \frac{w_i^2w_j}{2} + \sum_{\substack{iii\in \mathcal{E} \\ i\in [n]\setminus \{1\}}} \frac{w_i^3}{6}.
    \end{align*}
    Fix $\alpha_2' \coloneq \frac{w_2}{c}, \dots, \alpha_n' \coloneq \frac{w_n}{c}$. (If $c = 0$, then $w(\FR_1(P))\leq 1/27 < 1/8$, which is suboptimal by Lemma \ref{lem:smallpattern}. So we may assume $c > 0$.) Then, let $\alpha_1$ and $\alpha_2$ be constants, depending only on the normalized weights $\alpha_i'$, such that 
    \[w(\FR_1(P)) = \frac{ab^2}{4} + \frac{a^2c}{2} + \frac{b^2c}{4} + abc + \frac{1}{2}(a + b)\alpha_1c^2 + \frac{1}{6}\alpha_2c^3.\]
    We have 
    \[\frac{1}{2}(a + b)\left(\sum_{1ii\in \mathcal{E}} w_i^2\right) + (a + b)\left(\sum_{\substack{1ij\in \mathcal{E} \\ 2\leq i < j\leq n}} w_iw_j\right) \leq \frac{1}{2}(a + b)c^2.\]
    Likewise, 
    \[\sum_{\substack{ijk\in \mathcal{E} \\ 2\leq i < j < k \leq n}} w_iw_jw_k + \sum_{\substack{iij\in\mathcal{E} \\ i,j\in [n]\setminus\{1\}\\ i\neq j}} \frac{w_i^2w_j}{2} + \sum_{\substack{iii\in \mathcal{E} \\ i\in [n]\setminus \{1\}}} \frac{w_i^3}{6} \leq \frac{1}{6}c^3.\]
    Therefore, $\alpha_1, \alpha_2\in [0, 1]$. Let $s = 1 - c$, and define $x\in [0, 1]$ such that $a = xs$ and $b = (1 - x)s$. Then, 
    \[w(\FR_1(P)) = \frac{s^3}{4}x(1 - x)^2 + \frac{s^2c}{4}(1 + 2x - x^2) + \frac{1}{2}\alpha_1sc^2 + \frac{1}{6}\alpha_2c^3.\]
    We want to show that $x = 1$ (or $b = 0$) for an optimal weighting, since this implies that $\lambda(\FR_1(P)) = \lambda(P)$. Fix any $c\in [0, 1]$. Then, 
    \[\frac{d}{dx}w(\FR_1(P)) = \frac{s^2}{4}(1 - x)(s(1 - 3x) + 2c).\]
    If $\frac{d}{dx}w(\FR_1(P)) = 0$ for $x\in (0, 1)$, then $x = x^* \coloneq \frac{1}{3}\left(\frac{2c}{s} + 1\right)$. If $c \geq \frac{1}{2}$, then $x^* \geq 1$, which implies there is no critical point in $(0, 1)$. If $c \geq \frac{1}{2}$, then $\frac{d}{dx}w(\FR_1(P)) \geq 0$, which implies $x = 1$ maximizes $w(\FR_1(P))$. On the other hand, if $c < \frac{1}{2}$, then $x = x^*$ maximizes $w(\FR_1(P))$, since $\frac{d}{dx}w(\FR_1(P))$ is positive for $x < x^*$ and negative for $x > x^*$. Therefore, it suffices to show $c \geq \frac{1}{2}$ for an optimal weighting. 
    
    We now compare the boundary solution $x = 1$ with the interior critical point $x = x^*$ and show that the latter never yields a larger value. Define $B : [0, 1]\to [0, 1]$ to be the function of $c$ obtained from setting $x = 1$ in $w(\FR_1(P))$, and define $I : [0, 1]\to [0, 1]$ to be the function of $c$ obtained from setting $x = x^*$ in $w(\FR_1(P))$. That is, 
    \begin{align*}
        B(c) &= \frac{s^2c}{2} + \frac{1}{2}\alpha_1sc^2 + \frac{1}{6}\alpha_2c^3 \\
        I(c) &= \frac{1}{27} + \frac{5c}{18} - \frac{5c^2}{9} + \frac{11c^3}{54} + \frac{1}{2}\alpha_1sc^2 + \frac{1}{6}\alpha_2c^3.
    \end{align*}
    Then, the optimal weighting is 
    \[w(\FR_1(P)) = \max\left\{\max_{c\in [0, 1/2]} I(c), \max_{c\in [1/2, 1]} B(c)\right\}.\]
    So it suffices to show that $I(c)$ does not provide an optimal weighting. We do this by assuming that $I(c)$ is optimal and deriving the contradiction $B(1/2) > I(c)$. Let $Q$ be a quadratic polynomial in $c$ defined by
    \[Q(c; \alpha_1, \alpha_2) = (-108c^2 + 54c + 27)\alpha_1 + (36c^2 + 18c + 9)\alpha_2 + 44c^2 - 98c + 11.\]
    Then, $B(1/2) - I(c)$ can be factored as 
    \[B(1/2) - I(c) = \frac{(1 - 2c)Q(c; \alpha_1, \alpha_2)}{432}.\]
    Since $1 - 2c > 0$ for $c < \frac{1}{2}$, we want to show for any fixed $c < \frac{1}{2}$, 
    \[\min Q(c; \alpha_1, \alpha_2) > 0,\]
    where the minimum is taken over all admissible choices $(\alpha_1, \alpha_2)$. Define 
    \begin{align*}
        t(c) &= \frac{19}{216} - \frac{5c}{18} + \frac{5c^2}{9} - \frac{11c^3}{54},\\
        u(c) &= \frac{1}{2}sc^2, \\
        v(c) &= \frac{1}{6}c^3.
    \end{align*}
    If the interior solution $x = x^*$ is optimal, then by Lemma \ref{lem:smallpattern} we have $I(c) \geq 1/8$, which implies
    \[u(c)\alpha_1 + v(c)\alpha_2 \geq t(c).\]
    Therefore, $(\alpha_1, \alpha_2)$ lies in the feasible set
    \[S\coloneq \{(\alpha_1, \alpha_2)\in [0, 1]^2 : u(c)\alpha_1 + v(c)\alpha_2 \geq t(c)\}.\]
    Since $Q$ is linear in $\alpha_1$ and $\alpha_2$, for fixed $c$ its minimum over $S$ (when $S$ is nonempty) is attained at a vertex of $S$. For $c < \frac{1}{2}$, 
    \[t(c) - u(c) = \frac{(1 - 2c)^2(16c + 19)}{216} > 0.\]
    Furthermore, $u(c) + v(c) \geq t(c)$, since $S\neq \emptyset$. So the constraint line intersects $\alpha_1 = 1$ between $(1, 0)$ and $(1, 1)$, and therefore intersects the boundary of $[0, 1]^2$ at $(\alpha_1, \alpha_2) = \left(1, \frac{t(c) - u(c)}{v(c)}\right)$. Define
    \begin{align*}
        q_0(c) &= 44c^2 - 98c + 11, \\
        q_1(c) &= -108c^2 + 54c + 27, \\
        q_2(c) &= 36c^2 + 18c + 9.
    \end{align*}
    Then, 
    \begin{equation} \label{eq:slope}
        q_1(c) - \frac{q_2(c)u(c)}{v(c)} = -\frac{27}{c} < 0.
    \end{equation}
    Let $(\alpha_1, \alpha_2)$ be a point on the constraint line $u(c)\alpha_1 + v(c)\alpha_2 = t(c)$. Then, $\alpha_2 = \frac{t(c) - u(c)\alpha_1}{v(c)}$. If we plug that value into $Q$, we get 
    \begin{align*}
        Q(c; \alpha_1, \alpha_2) &= q_0(c) + q_1(c)\alpha_1 + q_2(c)\alpha_2 \\ 
        &= \left(q_0(c) + \frac{q_2(c)t(c)}{v(c)}\right) + \alpha_1\left(q_1(c) - \frac{q_2(c)u(c)}{v(c)}\right).
    \end{align*}
    By inequality (\ref{eq:slope}), when restricted to the constraint line $u(c)\alpha_1 + v(c)\alpha_2 = t(c)$, $Q$ is decreasing in $\alpha_1$. Therefore, it is minimized at $\alpha_1 = 1$. We plug $(\alpha_1, \alpha_2) = \left(1, \frac{t(c) - u(c)}{v(c)}\right)$ into $Q$ and simplify to get
    \[Q\left(c; 1, \frac{t(c) - u(c)}{v(c)}\right) = \frac{(1 - 2c)(16c + 19)}{4c^3}.\]
    For $c < \frac{1}{2}$, the expression is positive, which implies $Q(c; \alpha_1, \alpha_2) > 0$ and therefore $B(1/2) > I(c)$ for all admissible $(\alpha_1, \alpha_2)$ and all $c < \frac{1}{2}$, as desired. So any optimal weighting must satisfy $c\geq 1/2$. In this interval, $x = 1$ (or equivalently $b = 0$) is optimal. Therefore an optimal weighting of $\FR_1(P)$ corresponds to a weighting of $P$, giving $\lambda(\FR_1(P))\leq \lambda(P)$. The reverse inequality follows by assigning weight $0$ to $(1,2)$ and $(1,3)$ in $\FR_1(P)$. Therefore, $\lambda(\FR_1(P))=\lambda(P)$.
\end{proof}

\section{Proof of Theorem \ref{thm:nonjump1}}
Let $P = \{123, 122, 112, 113, 223\}$. By Theorem \ref{thm:frmethod} and Theorem \ref{thm:main}, it suffices to show that vertex $1$ receives a positive weight in an optimal weighting for $P$ and
\[\lambda(P) = \frac{32}{243}.\]
This will show us that $3!\cdot \frac{32}{243} = \frac{64}{81}$ is not a jump for $r = 3$. Let $w$ be an optimal weighting for $P$. Then,  
\[w(P) = w_1w_2w_3 + \frac{w_1w_2^2}{2} + \frac{w_1^2w_2}{2} + \frac{w_1^2w_3}{2} + \frac{w_2^2w_3}{2}.\]
Suppose $w_1$, $w_2$, and $w_3$ are positive. Then, by Lemma \ref{lem:equiv} we have $w_1 = w_2$, which implies
\[w(P) = 2w_1^2w_3 + w_1^3.\] 
We want to maximize $w(P)$ subject to the constraint 
\[2w_1 + w_3 = 1.\] 
Using the method of Lagrange multipliers,
\begin{align*}
    \frac{\partial}{\partial w_1} w(P) &= 2\frac{\partial}{\partial w_3} w(P) \\
    4w_1w_3 + 3w_1^2 &= 4w_1^2 \\
    w_1 - 4w_3 &= 0.
\end{align*}
Then, using the constraint, we get $w_1 = w_2 = 4/9$ and $w_3 = 1/9$, leaving $w(P) = 32/243$. Otherwise, an optimal weighting exists on the boundary. If $w_1 = 0$, then $w_2 = \frac{2}{3}$ and $w_3 = \frac{1}{3}$ maximizes $w(P)$. This leaves $w(P) = 1/27 < 32/243$. We get the same result when $w_2 = 0$. If $w_3 = 0$, then by Lemma \ref{lem:smallpattern} we have $w(P) \leq 1/8 < 32/243$. So $\lambda(P) = 32/243$. 

\section{Proof of Theorem \ref{thm:nonjump2}}
Let $n$ be fixed, and define $P$ to be the pattern on $n + 1$ vertices with edge set
\[\binom{[n + 1]}{3}\cup \{1ij : i, j\in [n + 1]\setminus \{1\}\}\cup \{11i : i\in [n+1]\setminus\{1\}\}.\] 
By Theorem \ref{thm:frmethod} and Theorem \ref{thm:main}, it suffices to show that vertex $1$ receives a positive weight in an optimal weighting for $P$ and
\[\lambda(P) = \frac{1}{6}\left(1 - \frac{k^2}{(n + k)^2}\right).\]
By Lemma \ref{lem:equiv}, there exists an optimal weighting $w(P)$ such that all of $w_2, \dots, w_{n + 1}$ are equal, or some of $w_2, \dots, w_{n + 1}$ are $0$ and the rest are equal. Let $m$ be the number of these vertices that are assigned positive weight. Then, define $a = w_1$ and $b = w_2 + \dots + w_{n + 1}$, so
\begin{align*}
    w(P) &= \binom{m}{2}\frac{ab^2}{m^2} + \binom{m}{3}\frac{b^3}{m^3} + \frac{a^2b}{2} + \frac{ab^2}{2m} \\
    &= \frac{ab^2}{2} + \binom{m}{3}\frac{b^3}{m^3} + \frac{a^2b}{2}.
\end{align*} 
Since $\frac{1}{m^3}\binom{m}{3} = \frac{(m - 1)(m - 2)}{6m^2}$ is non-decreasing in $m$, it follows that $w(P)$ is maximized when $m = n$. We want to maximize $w(P)$ subject to the constraint
\[a + b = 1.\] 
First, suppose $a, b > 0$. Then, using the Lagrange multipliers method,
\begin{align*}
    \frac{\partial}{\partial a} w(P) &= \frac{\partial}{\partial b} w(P) \\
    \frac{b^2}{2} + ab &= ab + \binom{n}{3}\frac{3b^2}{n^3} + \frac{a^2}{2} \\
    \left(1 - \frac{(n - 1)(n - 2)}{n^2}\right)b^2 &= a^2 \\
    a &= \frac{k}{n}\cdot b.
\end{align*}
We use the constraint equation to get
\begin{align*}
    b &= \frac{1}{1 + \frac{k}{n}} = \frac{n}{n + k} \\
    a &= \frac{k}{n}\cdot b = \frac{k}{n + k}.
\end{align*}
Finally, plugging these values for $a$ and $b$ into the expression for $w(P)$ we get 
\[w(P) = \frac{1}{6}\left(1 - \frac{k^2}{(n + k)^2}\right).\]
Suppose instead an optimal weighting exists on the boundary. If $a = 0$ and $b = 1$,
\[w(P) =  \frac{1}{n^3}\binom{n}{3} = \frac{(n - 1)(n - 2)}{6n^2} = \frac{1}{6}\left(1 - \frac{k^2}{n^2}\right).\]
However, 
\[\frac{1}{6}\left(1 - \frac{k^2}{n^2}\right) < \frac{1}{6}\left(1 - \frac{k^2}{(n + k)^2}\right).\] 
Therefore, $\lambda(P) = \frac{1}{6}\left(1 - \frac{k^2}{(n + k)^2}\right)$, as desired. 

\section*{Acknowledgments} 
It is a pleasure to thank my mentor, V\'ictor Hugo Almendra Hern\'andez, for his frequent feedback and discussion about my ideas for the paper. I also want to thank Professor Razborov for introducing me to this topic and guiding my study of it. Lastly, I want to thank Professor May for organizing the REU that made this research possible.

\section*{Funding} 
This work was supported by the University of Chicago Mathematics Research Experience for Undergraduates (REU) Program.

\bibliographystyle{elsarticle-num}
\bibliography{refs}

\end{document}